\definecolor{webgreen}{rgb}{0,.5,0}
\definecolor{webbrown}{rgb}{.6,0,0}
\tikzset{circle node/.style = {circle,inner sep=1pt,draw, fill=white},
        X node/.style = {fill=white, inner sep=1pt},
        dot node/.style = {circle, draw, inner sep=5pt}
        }
\newtheorem{theorem}{Theorem}
\newtheorem{proposition}[theorem]{Proposition}
\newtheorem{corollary}[theorem]{Corollary}
\theoremstyle{definition}
\newtheorem{example}[theorem]{Example}
\DeclareMathOperator{\Rev}{Rev}
\begin{document}

\begin{center}
\vskip 1cm{\LARGE\bf On the Central Description of the Group of Riordan Arrays} \vskip 1cm \large
Paul Barry\\
School of Science\\
Waterford Institute of Technology\\
Ireland\\
\href{mailto:pbarry@wit.ie}{\tt pbarry@wit.ie}
\end{center}
\vskip .2 in

\begin{abstract} We provide an alternative description of the group of Riordan arrays, by using two power series of the form $\sum_{n=0}^{\infty} g_n x^n$, where $g_0 \ne 0$ to build a typical element of the constructed group. We relate these elements to Riordan arrays in the usual description, showing that each newly constructed element is the vertical half of a ``usual'' element. The product rules and the construction of the inverse are given in this new description, which we call a ``central'' description, because of links to the central coefficients of Riordan arrays. This is done for the case of ordinary generating functions. Finally, we briefly look at the exponential case.
\end{abstract}

\section{Preliminaries}
We let $\mathcal{F}=\{ a_0+a_1 x+ a_2 x^2+ \cdots \,| a_i \in \mathbf{R}\}$ be the set of formal power series with coefficients $a_i$ drawn from the ring $\mathbf{R}$. This ring can be any ring over which the operations we will carry out make sense, but for concreteness it can be assumed to be $\mathbb{Q}$. It will be seen that most of the matrices we deal with have integer entries. We shall use two distinguished subsets of $\mathcal{F}$, namely
$$\mathcal{F}_0=\{ a_0+a_1 x+ a_2 x^2+ \cdots \,| a_i \in \mathbf{R}, a_0 \ne 0\},$$ and
$$\mathcal{F}_1=\{ a_1 x+ a_2 x^2+ \cdots \,| a_i \in \mathbf{R}, a_1 \ne 0\}.$$
Throughout our exposition, we will stipulate that $g(x) \in \mathcal{F}_0$, $f(x) \in \mathcal{F}_0$, $u(x) \in \mathcal{F}_0$ and $v(x) \in \mathcal{F}_1$.

A Riordan array \cite{book, SGWW} may be defined by a pair $(u(x), v(x))$ of power series, represented by the invertible lower-triangular matrix $\left(t_{n,k}\right)$ where
$$t_{n,k}=[x^n] u(x)v(x)^k.$$
Here, the functional $[x^n]$ acts on elements of $\mathcal{F}$ by returning the coefficient of $x^n$ of the power series in question \cite{MC}.

A well-known example of a Riordan array is Pascal's triangle $\left(\binom{n}{k}\right)$, which is defined by
$\left(\frac{1}{1-x}, \frac{x}{1-x}\right)$. When Pascal's triangle is represented as a pyramid, the central elements of the pyramid are $\binom{2n}{n}$. We have
$$\binom{2n}{n}=[x^n](1+2x+x^2)^n=[x^n](1+x)^{2n}.$$
Another example of central numbers is that of the trinomial numbers which can be defined by $[x^n](1+x+x^2)^n$.

The set of Riordan arrays defines a group, with the product given by
$$(d(x), h(x)) \cdot (u(x), v(x))=(d(x) u(h(x)), v(h(x))),$$ and inverse given by
$$(u, v)^{-1}=\left(\frac{1}{u(\bar{v})}, \bar{v}\right),$$ where
$\bar{v}=\Rev(v)$ is the compositional inverse of $v$. The identity element of the group is given by $(1,x)$. Multiplication in the group translates into ordinary matrix multiplication of the associated matrix arrays.

An important subgroup of the Riordan group is the hitting-time subgroup \cite{Hitting}, given by the elements $\{\left(\frac{xv'(x)}{v(x)}, v(x) \right) \,|\, v \in \mathcal{F}_1\}$.

For a given Riordan array $\left(t_{n,k}\right)$, the two triangles $H=\left(t_{2n,n+k}\right)$ and $V=\left(t_{2n-k,n}\right)$ are known as the horizontal half and the vertical half of the array. We have the following description of these ``half'' arrays \cite{halves, Half}.
\begin{proposition} The vertical and horizontal halves of the Riordan array $(u, v)$ are given respectively by
$$V=(g(\phi(x)),x)\cdot \left(\frac{x \phi'(x)}{\phi(x)}, \phi(x)\right)=\left(\frac{g(\phi(x))x \phi'(x)}{\phi(x)}, \phi(x)\right),$$ and
$$H=(g(\phi(x)),x) \cdot \left(\frac{x \phi'(x)}{\phi(x)}, v(\phi(x))\right),$$
where
$$\phi(x)=\Rev\left(\frac{x^2}{v(x)}\right).$$
\end{proposition}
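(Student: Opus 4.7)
The plan is to verify both equations entry by entry, interpreting the symbol $g$ in the statement as a stand-in for the first component of the given Riordan array (i.e., reading $g$ for $u$). Concretely, I need to show that
\[ V_{n,k} := t_{2n-k,n} = [x^{2n-k}] u(x) v(x)^n \]
equals the $(n,k)$ entry of the Riordan array $\left(\tfrac{u(\phi(x)) x \phi'(x)}{\phi(x)}, \phi(x)\right)$, and analogously for $H_{n,k}=t_{2n,n+k}$. The key identity driving the proof is that, by the definition $\phi = \Rev(x^2/v(x))$, one has $\phi(y)^2/v(\phi(y)) = y$, i.e.
\[ v(\phi(y)) = \phi(y)^2 / y, \]
which is exactly what converts powers of $v$ into ratios of powers of $\phi$ and $y$, turning an $x$-coefficient into a $y$-coefficient.

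The heart of the argument is a change-of-variables computation on the coefficient extraction. First I would rewrite $V_{n,k}$ as a formal residue
\[ V_{n,k} = \mathrm{Res}_x\, u(x) v(x)^n x^{-(2n-k+1)}\, dx, \]
and then substitute $x = \phi(y)$, $dx = \phi'(y)\, dy$. The identity above collapses the factor $v(\phi(y))^n$ against $\phi(y)^{-(2n-k+1)}$ to leave
\[ V_{n,k} = [y^{n-1}]\, u(\phi(y))\, \phi(y)^{k-1}\, \phi'(y). \]
Pulling out a factor of $y$ and inserting a compensating $\phi(y)/\phi(y)$ rewrites this as $[y^n]\, \tfrac{u(\phi(y)) y \phi'(y)}{\phi(y)}\, \phi(y)^k$, which is by definition the $(n,k)$ entry of the Riordan array claimed for $V$. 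The horizontal half is handled in precisely the same way: the analogous substitution applied to $H_{n,k}=[x^{2n}]u(x)v(x)^{n+k}$ leaves one extra factor of $v(\phi(y))^k$ surviving, and this is exactly why the second component of the claimed factorisation for $H$ is $v(\phi(x))$ instead of $\phi(x)$.

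The principal obstacle is bookkeeping. One must carefully track the Jacobian factor $\phi'$, the shift in exponent from $x^{-(2n-k+1)}$ to $y^{-n}$ caused by absorbing $v^n$ into powers of $\phi$, and the leftover factor of $\phi(y)^{k-1}$ that must be massaged into the Riordan shape $[y^n]\, d(y) h(y)^k$. A minor secondary concern is justifying the residue and change-of-variables argument as formal power series manipulations; this is routine given that $\phi \in \mathcal{F}_1$, so the substitution is well-defined and the usual residue calculus on Laurent series applies. Equivalently, one can phrase the whole computation as an application of the Lagrange inversion formula with kernel $x^2/v(x)$, though the residue-substitution viewpoint is cleaner and makes uniform use of the single identity $v(\phi(y))=\phi(y)^2/y$.
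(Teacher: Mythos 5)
Your argument is correct. Note first that the paper itself offers no proof of this proposition: it is quoted from the references on half arrays (Barry, \emph{Linear Algebra Appl.} 582 (2019); He, \emph{Linear Algebra Appl.} 604 (2020)), so there is no in-paper argument to compare against. Your residue change-of-variables is the standard Lagrange-inversion style computation used in those sources, and it is in the same spirit as the paper's own proof of its later proposition on the central description, which reaches the analogous conclusion by introducing an antiderivative $G$ with $G'(x)=x^{k-1}g(x)$ and differentiating, rather than invoking the residue substitution directly. The details check out: since $v\in\mathcal{F}_1$, the series $x^2/v(x)$ lies in $\mathcal{F}_1$, so $\phi=\Rev\!\left(x^2/v(x)\right)\in\mathcal{F}_1$ and the formal substitution $x=\phi(y)$ in the residue is legitimate; your key identity $v(\phi(y))=\phi(y)^2/y$ then gives
$$V_{n,k}=[y^{n-1}]\,u(\phi(y))\,\phi(y)^{k-1}\phi'(y)=[y^{n}]\,\frac{u(\phi(y))\,y\,\phi'(y)}{\phi(y)}\,\phi(y)^{k},$$
and for $H_{n,k}=[x^{2n}]u(x)v(x)^{n+k}$ only $v^{n}$ is absorbed, leaving the surviving factor $v(\phi(y))^{k}$ that accounts for the second component $v(\phi(x))$; this matches the stated factorisations because the left factor $(u(\phi(x)),x)$ has trivial composition part. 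Your reading of the statement's $g$ as the first component $u$ of the array is the right interpretation of what is evidently a notational slip in the paper.
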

We have used the notation $\Rev(v)$ to denote the compositional inverse of $v \in \mathbf{F}_1$. Other notations commonly used are $v^{<-1>}$ and $\bar{v}$.

The goal of this note is to provide a ``central'' description of Riordan arrays.

\section{A motivating example}
Pascal's triangle $\left(\binom{n}{k}\right)_{0\le n,k \le \infty}$, which begins
 $$\left(
\begin{array}{ccccccc}
 1 & 0 & 0 & 0 & 0 & 0 & 0 \\
 1 & 1 & 0 & 0 & 0 & 0 & 0 \\
 1 & 2 & 1 & 0 & 0 & 0 & 0 \\
 1 & 3 & 3 & 1 & 0 & 0 & 0 \\
 1 & 4 & 6 & 4 & 1 & 0 & 0 \\
 1 & 5 & 10 & 10 & 5 & 1 & 0 \\
 1 & 6 & 15 & 20 & 15 & 6 & 1 \\
\end{array}
\right),$$ is one of the most studied and used objects in mathematics. There are many ways to describe it. In this note, we use two ways. Firstly, we can regard it as the matrix representation of the Riordan array $\left(\frac{1}{1-x}, \frac{x}{1-x}\right)$. This means that we have
$$\binom{n}{k}=[x^n] \frac{1}{1-x} \left(\frac{x}{1-x}\right)^k.$$ Here, $[x^n]$ is the functional that operates on power series $f(x)$: $[x^n]$ extracts the coefficient $a_n$ of $x^n$ in the expansion of $f(x)$.

The second way is the following. We have
$$\binom{n}{k}=[x^{n-k}] (1+x)^n.$$ We call this a central representation of Pascal's triangle.
Other familiar and not so familiar number triangles can be represented in this way. For instance, the triangle with general $(n,k)$-th term given by
$$[x^{n-k}](1+x)^{2n}=[x^{n-k}] (1+2x+x^2)^n$$ is the triangle $\left(\binom{2n}{n+k}\right)$ that begins
$$\left(
\begin{array}{ccccccc}
 1 & 0 & 0 & 0 & 0 & 0 & 0 \\
 2 & 1 & 0 & 0 & 0 & 0 & 0 \\
 6 & 4 & 1 & 0 & 0 & 0 & 0 \\
 20 & 15 & 6 & 1 & 0 & 0 & 0 \\
 70 & 56 & 28 & 8 & 1 & 0 & 0 \\
 252 & 210 & 120 & 45 & 10 & 1 & 0 \\
 924 & 792 & 495 & 220 & 66 & 12 & 1 \\
\end{array}
\right).$$
This is the Riordan array $\left(\frac{1}{\sqrt{1-4x}}, x c(x)^2\right)$, where $c(x)=\frac{1-\sqrt{1-4x}}{2x}$ is the generating function of the Catalan numbers $C_n=\frac{1}{n+1} \binom{2n}{n}$.

The triangle $\left(\binom{2n}{n+k}\right)$ is the horizontal half \cite{halves} of Pascal's triangle.

In turn, Pascal's triangle is the horizontal half of the triangle whose $(n,k)$-th term is given by
$$[x^{n-k}] (1+x)^{n/2}.$$ This triangle begins
$$\left(
\begin{array}{ccccccc}
 \mathbf{1} & 0 & 0 & 0 & 0 & 0 & 0 \\
 \frac{1}{2} & 1 & 0 & 0 & 0 & 0 & 0 \\
 0 & \mathbf{1} & \mathbf{1} & 0 & 0 & 0 & 0 \\
 -\frac{1}{16} & \frac{3}{8} & \frac{3}{2} & 1 & 0 & 0 & 0 \\
 0 & 0 & \mathbf{1} & \mathbf{2} & \mathbf{1} & 0 & 0 \\
 \frac{3}{256} & -\frac{5}{128} & \frac{5}{16} & \frac{15}{8} & \frac{5}{2} & 1 & 0 \\
 0 & 0 & 0 & \mathbf{1} & \mathbf{3} & \mathbf{3} & \mathbf{1} \\
\end{array}
\right).$$
This is the Riordan array
$$\left(\frac{x+\sqrt{x^2+4}}{\sqrt{x^2+4}}, x\frac{x+\sqrt{x^2+4}}{\sqrt{x^2+4}}\right).$$
We see that the central description of these Riordan arrays is more compact than the traditional method.
In this note, we explore this central description in the general context.
\section{Theoretical  results}
In the following list of propositions, we outline the theory of the central description of Riordan arrays.
\begin{proposition} Let $(t_{n,k})_{0 \le n,k \le \infty}$ be the lower-triangular matrix defined by
$$t_{n,k}=[x^{n-k}] g(x)f(x)^n.$$
Then $(t_{n,k})$ is the Riordan array
$$\left(g\left(\Rev\left(\frac{x}{f(x)}\right)\right)\frac{x \left(\Rev\left(\frac{x}{f(x)}\right)\right)'}{\Rev\left(\frac{x}{f(x)}\right)},\Rev\left(\frac{x}{f(x)}\right)\right).$$
\end{proposition}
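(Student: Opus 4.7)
The plan is to verify directly that the Riordan array $(U,V)$ claimed on the right-hand side has the entries $[x^{n-k}]g(x)f(x)^n$. Write $\phi(x)=\Rev(x/f(x))$, so that $\phi$ is characterized by the functional equation
$$\phi(x)=x\,f(\phi(x)),$$
which is the standard setup for Lagrange inversion (Lagrange--Bürmann). The candidate Riordan array is
$$U(x)=\frac{g(\phi(x))\,x\,\phi'(x)}{\phi(x)},\qquad V(x)=\phi(x),$$
so I need to prove that
$$[x^n]\,U(x)V(x)^k \;=\; [x^{n-k}]\,g(x)f(x)^n.$$

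First I would simplify the left-hand side. Multiplying $U$ by $V^k$ gives
$$U(x)V(x)^k \;=\; g(\phi(x))\,x\,\phi'(x)\,\phi(x)^{k-1}.$$
Next I would convert the factor $x\phi'(x)$ into a form amenable to Lagrange inversion. Differentiating $\phi(x)=xf(\phi(x))$ yields $\phi'(x)(1-xf'(\phi(x)))=f(\phi(x))$, and therefore
$$x\,\phi'(x)\;=\;\frac{x f(\phi(x))}{1-xf'(\phi(x))}\;=\;\frac{\phi(x)}{1-xf'(\phi(x))},$$
using $xf(\phi(x))=\phi(x)$. Substituting this back gives the clean expression
$$U(x)V(x)^k \;=\; \frac{g(\phi(x))\,\phi(x)^k}{1-x f'(\phi(x))}.$$

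Now I would apply the Lagrange--Bürmann identity in its ``kernel'' form: whenever $w=xf(w)$ one has
$$[x^n]\,\frac{H(w)}{1-xf'(w)}\;=\;[x^n]\,H(x)f(x)^n$$
for any formal power series $H$. Taking $H(y)=g(y)y^k$ produces
$$[x^n]\,U(x)V(x)^k \;=\; [x^n]\,g(x)\,x^k\,f(x)^n \;=\; [x^{n-k}]\,g(x)\,f(x)^n,$$
which is exactly $t_{n,k}$. Finally, one should note that $V(x)=\phi(x)\in\mathcal{F}_1$ and $U(x)\in\mathcal{F}_0$ (because $g\in\mathcal{F}_0$ and $x\phi'(x)/\phi(x)$ has constant term $f(0)/f(0)=1$), so $(U,V)$ is indeed a genuine Riordan array.

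The only delicate point is the invocation of the kernel form of Lagrange--Bürmann; this can either be cited or derived in one line from the more familiar $[x^n]H(\phi(x))=\tfrac{1}{n}[x^{n-1}]H'(x)f(x)^n$ by writing $g(x)x^k=\tfrac{1}{n}\frac{d}{dx}[\,\text{antiderivative}\,]$, but using the kernel form directly is cleanest. Everything else is routine bookkeeping with the identity $\phi=xf(\phi)$.
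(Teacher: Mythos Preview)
Your proof is correct. Both your argument and the paper's rest on Lagrange inversion for $\phi=\Rev(x/f)$, but they invoke different avatars of it and run in opposite directions. The paper starts from $t_{n,k}=[x^{n-k}]g(x)f(x)^n$, introduces an antiderivative $G$ with $G'(x)=x^{k-1}g(x)$, applies the form $\tfrac{1}{n}[x^{n-1}]G'(x)f(x)^n=[x^n]G(\phi)$, and then differentiates $G(\phi)$ to recover the Riordan pair. You instead start from the claimed Riordan pair, simplify $UV^k$ via the functional equation $\phi=xf(\phi)$ to obtain $g(\phi)\phi^k/(1-xf'(\phi))$, and then apply the kernel form $[x^n]\,H(\phi)/(1-xf'(\phi))=[x^n]H(x)f(x)^n$ directly. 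Your route sidesteps the auxiliary antiderivative and the subsequent differentiation, at the cost of citing the slightly less common kernel version of Lagrange--B\"urmann; the paper's route uses only the most basic Lagrange formula but needs the $G$ trick. Either way the substance is the same single application of Lagrange inversion.
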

\begin{proof}
We have
\begin{align*}
[x^{n-k}]g(x)f(x)^n &=[x^n]x^k g(x)f(x)^n \\
&=[x^n] x^k g(x) \left(\frac{x}{\frac{x}{f(x)}}\right)^n\\
&=[x^{n-1}]x^{k-1} g(x)\left(\frac{x}{\frac{x}{f(x)}}\right)^n\\
&= n \cdot \frac{1}{n} [x^{n-1}] x^{k-1}g(x)\left(\frac{x}{\frac{x}{f(x)}}\right)^n\\
&= n [x^n] G\left(\Rev\left(\frac{x}{f(x)}\right)\right),\end{align*}
where $G(x)$ is such that
$$G'(x)=x^{k-1}g(x).$$
Then
\begin{align*}
[x^{n-k}]g(x)f(x)^n&=n [x^n] G(\Rev\left(\frac{x}{f(x)}\right)\\
&=[x^{n-1}]\frac{d}{dx} G\left(\Rev\left(\frac{x}{f(x)}\right)\right)\\
&=[x^{n-1}] G'\left(\Rev\left(\frac{x}{f(x)}\right)\right)\cdot \frac{d}{dx} \Rev\left(\frac{x}{f(x)}\right)\\
&=[x^{n-1}] g\left(\Rev\left(\frac{x}{f(x)}\right)\right)\left(\Rev\left(\frac{x}{f(x)}\right)\right)^{k-1} \left(\Rev\left(\frac{x}{f(x)}\right)\right)'\\
&=[x^n] x g\left(\Rev\left(\frac{x}{f(x)}\right)\right)\left(\Rev\left(\frac{x}{f(x)}\right)\right)^{k-1} \left(\Rev\left(\frac{x}{f(x)}\right)\right)'\\
&= [x^n]g\left(\Rev\left(\frac{f(x)}{x}\right)\right)\frac{x \left(\Rev\left(\frac{x}{f(x)}\right)\right)'}{\Rev\left(\frac{x}{f(x)}\right)}\left(\Rev\left(\frac{x}{f(x)}\right)\right)^k.\end{align*}
\end{proof}

\begin{corollary}  Let $(t_{n,k})_{0 \le n,k \le \infty}$ be the lower-triangular matrix defined by
$$t_{n,k}=[x^{n-k}] g(x)f(x)^n.$$
Then $(t_{n,k})$ is the Riordan array
$$\left(\frac{x \left(\Rev\left(\frac{x}{f(x)}\right)\right)'}{\Rev\left(\frac{x}{f(x)}\right)},\Rev\left(\frac{x}{f(x)}\right)\right)\cdot (g(x),x).$$
\end{corollary}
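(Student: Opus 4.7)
The plan is to treat this corollary as a direct reformulation of the preceding Proposition via the Riordan group multiplication rule. Write $\phi(x) = \Rev(x/f(x))$ for brevity. The Proposition identifies $(t_{n,k})$ with the Riordan array
$$\left(g(\phi(x)) \cdot \frac{x \phi'(x)}{\phi(x)},\; \phi(x)\right),$$
so the task reduces to recognizing this single Riordan array as a product of two simpler ones.

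First I would note that the claimed right-hand factor $(g(x), x)$ is an ``appending'' Riordan array: multiplying any Riordan array $(d, h)$ on the right by $(g(x), x)$ uses the group law $(d(x), h(x)) \cdot (u(x), v(x)) = (d(x) u(h(x)), v(h(x)))$ with $u = g$ and $v = x$, giving $(d(x) \cdot g(h(x)), h(x))$. Then I would apply this with $d(x) = \frac{x \phi'(x)}{\phi(x)}$ and $h(x) = \phi(x)$, obtaining
$$\left(\frac{x \phi'(x)}{\phi(x)},\; \phi(x)\right) \cdot (g(x), x) = \left(\frac{x \phi'(x)}{\phi(x)} \cdot g(\phi(x)),\; \phi(x)\right),$$
which matches the Proposition exactly, proving the Corollary.

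There is no genuine obstacle here; the content is purely a repackaging. Still, it is worth emphasizing the conceptual point: the first factor $\left(\frac{x \phi'(x)}{\phi(x)}, \phi(x)\right)$ lies in the hitting-time subgroup introduced earlier in the preliminaries, so the Corollary exhibits every ``centrally described'' array as a hitting-time array postcomposed with the trivial appending $(g(x), x)$. This observation is the payoff of the reformulation and I would state it explicitly after the short verification.
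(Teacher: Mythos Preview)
Your proposal is correct and is exactly the intended argument: the paper states this as an unproved corollary of the preceding Proposition, and the only step needed is the Riordan group multiplication $(d,h)\cdot(g,x)=(d\cdot g(h),h)$ applied with $d=\frac{x\phi'}{\phi}$ and $h=\phi$, which you carry out correctly. Your added remark that the left factor lies in the hitting-time subgroup is apt and matches the paper's later use of this decomposition.
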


\begin{corollary}  Let $(t_{n,k})_{0 \le n,k \le \infty}$ be the lower-triangular matrix defined by
$$t_{n,k}=[x^{n-k}] g(x)f(x)^n.$$ Then the inverse matrix $(t_{n,k})^{-1}$ is given by the product
$$\left(\frac{1}{g(x)}, x\right)\cdot \left(\frac{x \left(\frac{x}{f(x)}\right)'}{\left(\frac{x}{f(x)}\right)}, \frac{x}{f(x)}\right)=\left(\frac{1}{g(x)}\frac{x \left(\frac{x}{f(x)}\right)'}{\left(\frac{x}{f(x)}\right)},\frac{x}{f(x)}\right).$$
\end{corollary}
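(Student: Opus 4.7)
The plan is to invert the factorization provided by the previous corollary. That corollary expresses $(t_{n,k})$ as the Riordan product $A\cdot B$, where
$$A=\left(\frac{x V'(x)}{V(x)},V(x)\right),\qquad B=(g(x),x),\qquad V(x)=\Rev\!\left(\frac{x}{f(x)}\right),$$
so that $A$ lies in the hitting-time subgroup. Since the Riordan group satisfies $(AB)^{-1}=B^{-1}A^{-1}$, it suffices to compute $B^{-1}$ and $A^{-1}$ separately and then multiply them in the reverse order.

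The factor $B^{-1}=(1/g(x),x)$ is immediate from $(u,v)^{-1}=(1/u(\bar v),\bar v)$ applied to $(g(x),x)$.

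The crux is the computation of $A^{-1}$. Writing $u(x)=xV'(x)/V(x)$ and noting that $\Rev(V)=x/f(x)$, the Riordan inversion formula yields
$$A^{-1}=\left(\frac{1}{u(x/f(x))},\frac{x}{f(x)}\right).$$
To simplify the first component I would exploit the two identities that come from $V$ and $x/f(x)$ being compositional inverses: $V(x/f(x))=x$, which handles the denominator of $u(x/f(x))$, and differentiating this same identity gives $V'(x/f(x))\cdot (x/f(x))'=1$, which converts $V'(x/f(x))$ into $1/(x/f(x))'$. Substituting both simplifications reduces $A^{-1}$ to
$$\left(\frac{x\,(x/f(x))'}{x/f(x)},\frac{x}{f(x)}\right),$$
which as a by-product shows that the hitting-time subgroup is closed under inversion.

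Finally, multiplying $B^{-1}\cdot A^{-1}$ by the Riordan product rule is trivial since the second component of $B^{-1}$ is $x$, so no composition occurs and the first components merely multiply. This yields the stated expression. The only nontrivial step is the hitting-time inverse calculation above; the remainder is bookkeeping with the Riordan group axioms.
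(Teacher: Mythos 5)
Your proposal is correct and follows exactly the route the paper intends: the corollary is stated as an immediate consequence of the preceding factorization $(t_{n,k})=\left(\frac{x V'(x)}{V(x)},V(x)\right)\cdot(g(x),x)$ with $V=\Rev\left(\frac{x}{f}\right)$, and inverting that product in reverse order, using $V\!\left(\frac{x}{f(x)}\right)=x$ and its derivative to simplify the hitting-time factor, is precisely the derivation left implicit in the paper. Your computation of $A^{-1}=\left(\frac{x\left(\frac{x}{f}\right)'}{\frac{x}{f}},\frac{x}{f}\right)$ and the final multiplication with $\left(\frac{1}{g(x)},x\right)$ are both correct.
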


\begin{corollary}
Consider the lower-triangular matrix defined by
$$\tilde{t}_{n,k}= [x^k] g(x)f(x)^n.$$
Then the matrix $\left(\tilde{t}_{n,k}\right)$ is the reversal of the Riordan array
$$\left(g\left(\Rev\left(\frac{x}{f(x)}\right)\right)\frac{x \left(\Rev\left(\frac{x}{f(x)}\right)\right)'}{\Rev\left(\frac{x}{f(x)}\right)},\Rev\left(\frac{x}{f(x)}\right)\right).$$
\end{corollary}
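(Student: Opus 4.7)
The plan is to observe that this corollary is essentially a re-indexing statement: the reversal of a lower-triangular array $(a_{n,k})_{0 \le k \le n}$ is by definition the array whose $(n,k)$-entry is $a_{n,n-k}$, i.e., each row read right-to-left. So I would simply start from the formula of Proposition 2,
\[
t_{n,k} = [x^{n-k}] g(x) f(x)^n,
\]
and substitute $k \mapsto n-k$ in the index. This yields
\[
t_{n,n-k} = [x^{n-(n-k)}] g(x) f(x)^n = [x^k] g(x) f(x)^n = \tilde{t}_{n,k},
\]
so $(\tilde{t}_{n,k})$ is the reversal of $(t_{n,k})$. Since Proposition 2 identifies $(t_{n,k})$ with the Riordan array
\[
\left(g\!\left(\Rev\!\left(\tfrac{x}{f(x)}\right)\right)\frac{x \left(\Rev\!\left(\tfrac{x}{f(x)}\right)\right)'}{\Rev\!\left(\tfrac{x}{f(x)}\right)},\,\Rev\!\left(\tfrac{x}{f(x)}\right)\right),
\]
the conclusion follows immediately.

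There is no genuine obstacle here beyond bookkeeping. The only point that deserves a sentence of justification is the range of $k$: the formula $[x^k] g(x)f(x)^n$ does not vanish for $k > n$ in general, so to speak of a lower-triangular matrix one tacitly restricts to $0 \le k \le n$, and this is precisely the range on which the involution $k \mapsto n-k$ carries the triangular region to itself. Once this is noted, the identification $\tilde{t}_{n,k} = t_{n,n-k}$ is just a substitution, and no further Riordan-array machinery is required.
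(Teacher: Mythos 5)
Your argument is correct and is exactly the intended one: the paper states this corollary without proof as an immediate consequence of Proposition~2, and your substitution $k \mapsto n-k$ giving $\tilde{t}_{n,k}=t_{n,n-k}$ is precisely that immediate deduction. Your remark about restricting to $0 \le k \le n$ for triangularity/reversal to make sense is a sensible clarification the paper leaves tacit.
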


In the next proposition, we begin with a Riordan array $(u, v)$ and we find the elements $g, f$ of the central description of $(u,v)$.
\begin{proposition} Let $t_{n,k}$ be the $(n,k)$-element of the Riordan array $(u, v)$. Then
$$t_{n,k}=[x^{n-k}] g(x)f(x)^n,$$ where
$$f(x)=\frac{x}{\Rev(v(x))},\quad \text{and} \quad g(x)=f(x) \frac{u\left(\Rev(v(x))\right)}{v'\left(\Rev(v(x))\right)}.$$
\end{proposition}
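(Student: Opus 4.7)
The plan is to invert the formula of the preceding proposition. That proposition says that any matrix of the form $t_{n,k}=[x^{n-k}]g(x)f(x)^n$ is the Riordan array $\bigl(g(\phi)\,x\phi'/\phi,\,\phi\bigr)$ with $\phi=\Rev(x/f(x))$. So given an arbitrary Riordan array $(u,v)$ we want to find $g$ and $f$ that make this expression match $(u,v)$. Since the second coordinate must equal $v$, I would first set $\phi=v$, which forces $\Rev(x/f(x))=v(x)$, equivalently $x/f(x)=\Rev(v(x))$, i.e., $f(x)=x/\Rev(v(x))$. This immediately gives the first of the two formulas stated.

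Next I would equate the first coordinates. The requirement is
\[
g(v(x))\cdot\frac{xv'(x)}{v(x)}=u(x).
\]
Making the substitution $x=\Rev(v(x'))$, or equivalently replacing $x$ by $\Rev(v)$ throughout (writing $\bar v=\Rev(v)$), I get
\[
g(x)\cdot\frac{\bar v(x)\,v'(\bar v(x))}{x}=u(\bar v(x)),
\]
so
\[
g(x)=\frac{x}{\bar v(x)}\cdot\frac{u(\bar v(x))}{v'(\bar v(x))}=f(x)\cdot\frac{u(\bar v(x))}{v'(\bar v(x))},
\]
which is exactly the second formula in the statement. Substituting these $g$ and $f$ back into the previous proposition produces the Riordan array $(u,v)$, so its $(n,k)$-entry $t_{n,k}$ is indeed $[x^{n-k}]g(x)f(x)^n$.

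There is no real obstacle here; the argument is a direct inversion of the formula of the preceding proposition, combined with the substitution $x\mapsto\bar v(x)$. The only point requiring a little care is the verification that $f\in\mathcal F_0$ and $g\in\mathcal F_0$ so that the central representation is well defined: since $v\in\mathcal F_1$ with nonzero linear coefficient, $\bar v$ lies in $\mathcal F_1$ as well, and $x/\bar v(x)$ therefore has a nonzero constant term, so $f\in\mathcal F_0$; likewise $v'(\bar v(x))$ has a nonzero constant term, so $g\in\mathcal F_0$. With these checks the proof is complete.
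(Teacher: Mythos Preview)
Your proof is correct and follows essentially the same approach as the paper: invert the formula of the preceding proposition by setting $\phi=v$ to recover $f$, then solve $g(v(x))\,xv'(x)/v(x)=u(x)$ for $g$ via the substitution $x\mapsto\Rev(v)$. Your version is in fact more complete, since you carry the computation for $g$ through to the end and add the well-definedness checks $f,g\in\mathcal{F}_0$, which the paper omits.
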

We can also write this as
$$f(x)=\frac{x}{\Rev(v(x))},\quad \text{and} \quad g(x)=f(x) u\left(\Rev(v(x))\right)\left(\Rev(v(x))\right)'.$$
\begin{proof} We have
$$v(x)=\Rev\left(\frac{x}{f(x)}\right),$$ and hence
$$f(x)=\frac{x}{\Rev(v(x))}.$$
We have
$$u(x)=g(v)x \frac{v'}{v}.$$
Solving for $g(v)$ gives
$$g(v(x))=u(x) \frac{v(x)}{x v'(x)}.$$
Now setting $x=\Rev{v}$ allows us to solve for $g(x)$.
\end{proof}
\begin{example} We consider the Riordan array $(u, v)=\left(\frac{1}{1-x-x^2}, \frac{x}{1-2x}\right)$.
We have
$$v(x)=\frac{x}{1-2x} \Longrightarrow \Rev(v)(x)=\frac{x}{1+2x}.$$ In addition, we have
$$v'(x)=\frac{1}{(1-2x)^2}.$$
We find that
$$f(x)=\frac{x}{\Rev(v)(x)}=1+2x.$$
Also
$$g(x)=u\left(\frac{x}{1+2x}\right) \frac{x}{\frac{x}{1+2x} \cdot \frac{1}{(1-2\frac{x}{1+2x})^2}}$$ and hence we have $$g(x)=\frac{1+2x}{1+3x+x^2}.$$
Thus we have
$$[x^n]\frac{1}{1-x-x^2}\left(\frac{x}{1-2x}\right)^k=[x^{n-k}]\frac{1+2x}{1+3x+x^2}(1+2x)^n.$$
\end{example}
\begin{proposition} The Riordan array with general $(n,k)$-element $t_{n,k}$ given by
 $$t_{n,k}= [x^{n-k}] g(x)f(x)^n$$ is the vertical half of the Riordan array $(g(x), xf(x))$.
\end{proposition}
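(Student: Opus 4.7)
The plan is to verify the claim directly from the definition of the vertical half recalled in the preliminaries: if a Riordan array has $(n,k)$-entry $s_{n,k}$, then its vertical half $V$ has entries $V_{n,k}=s_{2n-k,\,n}$.

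First I would compute the generic entry $s_{n,k}$ of the Riordan array $(g(x),xf(x))$ straight from the defining formula:
$$s_{n,k}=[x^n]\,g(x)(xf(x))^k=[x^n]\,x^k g(x)f(x)^k=[x^{n-k}]\,g(x)f(x)^k,$$
using the standard shift $[x^n]\,x^k F(x)=[x^{n-k}]F(x)$. I would then substitute $n \mapsto 2n-k$ and $k \mapsto n$ to read off the $(n,k)$-entry of the vertical half,
$$V_{n,k}=s_{2n-k,\,n}=[x^{(2n-k)-n}]\,g(x)f(x)^n=[x^{n-k}]\,g(x)f(x)^n,$$
which is precisely $t_{n,k}$ as defined in the proposition. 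The argument is essentially a one-line manipulation of coefficient extractors, so there is no real obstacle: the factor $x^k$ brought in by $(xf(x))^k$ collides exactly with the $2n-k$ in the row index of the vertical half to produce $n-k$, while the $n$ in the column index delivers the correct exponent $n$ on $f(x)$.

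As a sanity check for coherence with the rest of Section 3, I would also verify that this abstract identification matches the explicit Riordan-array form already derived for $(t_{n,k})$ earlier in the section. Applying the vertical-half formula from the first proposition of the paper to $(u,v)=(g(x),xf(x))$ yields $\phi(x)=\Rev(x^2/(xf(x)))=\Rev(x/f(x))$, and its output $(g(\phi(x))\,x\phi'(x)/\phi(x),\,\phi(x))$ agrees verbatim with the Riordan array produced in the earlier proposition of Section 3. So the direct computation and the descriptive results cohere, and the proposition follows.
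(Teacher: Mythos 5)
Your proof is correct and is essentially the same one-line coefficient manipulation the paper uses: the paper writes $[x^{n-k}]g(x)f(x)^n=[x^{2n-k}]g(x)(xf(x))^n$ and identifies this with the $(2n-k,n)$-entry of $(g(x),xf(x))$, which is exactly your substitution into $V_{n,k}=s_{2n-k,n}$ read in the other direction. The consistency check against the vertical-half formula of the first proposition is a nice extra but not needed.
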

\begin{proof}
We have
\begin{align*}
[x^{n-k}] g(x)f(x)^n &= [x^{n-k}] \frac{1}{x^n} g(x)(xf(x))^n\\
&=[x^{2n-k}] g(x)(xf(x))^n.\end{align*}
\end{proof}

\section{Relation to the $A$-sequence and the $Z$-sequence}
To each Riordan array $(u, v)$  is associated an $A$-sequence with generating function $A(x)$ and a $Z$-sequence with generating function $Z(x)$. We have
$$A(x)=\frac{x}{\Rev(v)} \quad \text{and}\quad Z(x)=\frac{1}{\Rev(v)}\left(1-\frac{1}{u(\Rev(v))}\right).$$
We then have the following results.
\begin{proposition} Given a Riordan array $\left(t_{n,k}\right)$ defined by
$$t_{n,k}=[x^{n-k}] g(x)f(x)^n,$$ we have
$$A(x)=f(x) \quad \text{and} \quad Z(x)=\frac{f(x)}{x}\left(1-\frac{f(x)\frac{d}{dx}\left(\frac{x}{f(x)}\right)}{g(x)}\right).$$
\end{proposition}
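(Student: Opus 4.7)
The plan is to recognise that both expressions follow by substitution from two facts already in the paper: Proposition~3 identifies the ``central'' array $(t_{n,k})$ with the Riordan array $(u,v)$ where $v(x)=\phi(x):=\Rev(x/f(x))$ and $u(x)=g(\phi(x))\cdot x\phi'(x)/\phi(x)$; and the opening of Section~4 gives the closed forms $A(x)=x/\Rev(v)$ and $Z(x)=\tfrac{1}{\Rev(v)}\bigl(1-1/u(\Rev(v))\bigr)$. Thus the proof is essentially a substitution, with one mild use of the inverse function theorem.

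For $A(x)$, since $\Rev(v)=\Rev(\phi)=x/f(x)$, we get $A(x)=x\big/(x/f(x))=f(x)$ in one line.

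For $Z(x)$, the only real computation is the simplification of $u(\Rev(v))=u(x/f(x))$. Substituting $x\mapsto x/f(x)$ into $u(x)=g(\phi(x))\cdot x\phi'(x)/\phi(x)$ and using $\phi(x/f(x))=x$ (the defining property of $\phi$) collapses the outer factors and yields
\[
u\!\left(\frac{x}{f(x)}\right)=g(x)\cdot\frac{(x/f(x))\,\phi'(x/f(x))}{x}=\frac{g(x)\,\phi'(x/f(x))}{f(x)}.
\]
Now I would invoke the inverse function theorem: since $\phi$ is the compositional inverse of $x/f(x)$, we have $\phi'(x/f(x))=1\big/\tfrac{d}{dx}(x/f(x))$. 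Plugging this in gives $1/u(\Rev(v))=f(x)\tfrac{d}{dx}(x/f(x))/g(x)$, and then multiplying the bracket by $1/\Rev(v)=f(x)/x$ produces exactly the stated formula for $Z(x)$.

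The only step that is not completely mechanical is handling $\phi'(x/f(x))$; once that derivative is expressed via the inverse function theorem, everything else is algebraic bookkeeping, so I do not anticipate any genuine obstacle.
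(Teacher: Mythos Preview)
Your argument is correct. The paper states this proposition without proof, but your derivation is exactly the natural one implied by the surrounding material: combine the identification of $(t_{n,k})$ with the Riordan array $\bigl(g(\phi)\,x\phi'/\phi,\ \phi\bigr)$, $\phi=\Rev(x/f)$, with the formulas $A(x)=x/\Rev(v)$ and $Z(x)=\tfrac{1}{\Rev(v)}\bigl(1-1/u(\Rev(v))\bigr)$, and simplify using $\phi(x/f(x))=x$ and $\phi'(x/f(x))=1\big/\tfrac{d}{dx}(x/f(x))$.
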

We can also express $Z(x)$ as
$$Z(x)=\frac{f}{x}\left(1-\frac{1}{g}\left(1-\frac{xf'}{f}\right)\right).$$
\begin{proposition}
Given a Riordan array $(u,v)$ with $A$-sequence $A(x)$ and $Z$-sequence $Z(x)$, we have
$$f(x)=A(x) \quad\text{and}\quad g(x)=\frac{A-xA'}{A-xZ}.$$
\end{proposition}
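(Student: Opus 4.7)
The plan is to obtain both identities as immediate consequences of the preceding proposition, which already expresses $A(x)$ and $Z(x)$ in terms of the central data $(g, f)$. The first identity $f(x) = A(x)$ is a direct rereading of that proposition, so nothing needs to be done there. The real content is the second identity, and the only task is to invert the relation
\[
Z(x) \;=\; \frac{f(x)}{x}\left(1 - \frac{1}{g(x)}\left(1 - \frac{xf'(x)}{f(x)}\right)\right)
\]
to solve for $g(x)$ in terms of $f(x)$, $f'(x)$, and $Z(x)$.

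Concretely, I would first multiply through by $x/f$ to get $xZ/f = 1 - (1 - xf'/f)/g$, then rearrange to
\[
\frac{1}{g}\left(1 - \frac{xf'}{f}\right) \;=\; 1 - \frac{xZ}{f} \;=\; \frac{f - xZ}{f}.
\]
Solving for $g$ yields
\[
g(x) \;=\; \frac{f(x)\bigl(1 - xf'(x)/f(x)\bigr)}{f(x) - xZ(x)} \;=\; \frac{f(x) - xf'(x)}{f(x) - xZ(x)}.
\]
Finally, substituting $f = A$ (and hence $f' = A'$) on the right-hand side gives the claimed formula $g = (A - xA')/(A - xZ)$.

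The only thing to check before starting the algebra is that the inversion is well defined as a power series identity, which amounts to noting that $f(x) \in \mathcal{F}_0$ (so $f - xZ$ has nonzero constant term equal to $f(0) = A(0)$, which is invertible in $\mathcal{F}$). There is no genuine obstacle here; the proof is a one-line algebraic rearrangement of Proposition~8, and the ``hard part'' — namely deriving the $A$-sequence and $Z$-sequence in central form — has already been carried out in the previous statement.
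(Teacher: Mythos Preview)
Your proposal is correct and is exactly the approach the paper has in mind: the paper states this proposition immediately after the preceding one without giving a separate proof, so the intended argument is precisely the algebraic inversion of the formula $Z=\frac{f}{x}\bigl(1-\frac{1}{g}(1-\frac{xf'}{f})\bigr)$ that you carry out. Your remark on well-definedness (that $f-xZ\in\mathcal{F}_0$ since $f(0)=A(0)\ne 0$) is a nice addition.
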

Thus we have
$$t_{n,k}=[x^{n-k}] \frac{A-xA'}{A-xZ} A(x)^n.$$
Equivalently, we can say that the Riordan array defined by the $A$-sequence $A(x)$ and the $Z$-sequence $Z(x)$
is the reversal of the lower-triangular matrix defined by
$$\tilde{t}_{n,k}=[x^n] \frac{A-xA'}{A-xZ} A(x)^n.$$
We note that when $A'=Z$, we have
$$t_{n,k}=[x^{n-k}]A(x)^n.$$ This occurs only within the hitting-time subgroup of the Riordan group. Thus we have the following result.
\begin{proposition} A Riordan array $(u, v)$ with general $(n,k)$-th term $t_{n,k}=[x^n]uv^k$ is an element of the hitting time subgroup if and only if
$$t_{n,k}=[x^{n-k}]A(x)^n,$$ where $A(x)$ is the generating function of the $A$-sequence of $(u,v)$. This is equivalent to asking that
$$t_{n,k}=[x^{n-k}] \left(\frac{x}{\Rev(v)(x)}\right)^n.$$
\end{proposition}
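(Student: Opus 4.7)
The plan is to reduce this to the preceding propositions of Section 4 and then a short algebraic manipulation that identifies the condition $A' = Z$ with membership in the hitting-time subgroup. Since the previous proposition gives us
$$t_{n,k} = [x^{n-k}] \frac{A(x) - xA'(x)}{A(x)-xZ(x)}\,A(x)^n,$$
and since (by Proposition 6) the pair $(g,f)$ attached to a Riordan array is unique, the equality $t_{n,k}=[x^{n-k}]A(x)^n$ forces $g(x)=1$, i.e.\ $A(x)-xA'(x) = A(x) - xZ(x)$, which is equivalent to $A'(x) = Z(x)$. The converse direction is immediate. So the whole statement reduces to showing
$$A'(x) = Z(x) \iff u(x) = \frac{xv'(x)}{v(x)}.$$

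To handle this equivalence, I would introduce the shorthand $w(x) := \Rev(v)(x)$, so that $A(x) = x/w(x)$ and
$$Z(x) = \frac{1}{w(x)}\left(1 - \frac{1}{u(w(x))}\right).$$
Differentiating $A$ by the quotient rule gives $A'(x) = \frac{1}{w(x)} - \frac{xw'(x)}{w(x)^2}$. Setting $A'(x) = Z(x)$ and cancelling the common term $1/w(x)$, the condition collapses to
$$\frac{xw'(x)}{w(x)^2} = \frac{1}{w(x)\, u(w(x))},\quad\text{i.e.}\quad u(w(x)) = \frac{w(x)}{x\,w'(x)}.$$

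The last step is to re-express this identity in terms of $v$ rather than $w = \Rev(v)$. Substituting $x = v(y)$ so that $w(v(y)) = y$, and using the chain-rule consequence $w'(v(y)) = 1/v'(y)$, the equation becomes
$$u(y) = \frac{y}{v(y)\cdot 1/v'(y)} = \frac{y\,v'(y)}{v(y)},$$
which is precisely the defining form of a hitting-time element $\left(\tfrac{xv'(x)}{v(x)}, v(x)\right)$. Each step is reversible, so this establishes the ``if and only if.'' The final statement of the proposition, that this is equivalent to $t_{n,k} = [x^{n-k}](x/\Rev(v)(x))^n$, then follows immediately from $A(x) = x/\Rev(v)(x)$.

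The main obstacle I anticipate is keeping the compositional bookkeeping straight: the quantities $A$ and $Z$ are expressed in terms of $\Rev(v)$, but the hitting-time condition is naturally phrased in terms of $v$ itself, so the chain-rule identity $w'(v(y)) = 1/v'(y)$ and the substitution $x = v(y)$ are what make the computation close cleanly. Everything else is essentially an application of Proposition 8 together with the uniqueness of the central representation established in Proposition 6.
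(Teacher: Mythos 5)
Your argument is correct, and it follows the same reduction that the paper intends: by the preceding formula $t_{n,k}=[x^{n-k}]\frac{A-xA'}{A-xZ}A(x)^n$, everything comes down to the claim that $A'=Z$ holds precisely on the hitting-time subgroup. The paper, however, never proves that claim --- it simply asserts that $A'=Z$ ``occurs only within the hitting-time subgroup'' --- so your chain-rule computation (writing $w=\Rev(v)$, reducing $A'=Z$ to $u(w(x))=\frac{w(x)}{x\,w'(x)}$, then substituting $x=v(y)$ and using $w'(v(y))=1/v'(y)$ to get $u=\frac{xv'}{v}$) supplies exactly the missing verification, and it is carried out correctly. Two remarks. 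First, your step ``$t_{n,k}=[x^{n-k}]A(x)^n$ forces $g=1$'' invokes uniqueness of $g$ for fixed $f$; this is true but is not stated as such in the paper --- either justify it directly (if $[x^j](g_1-g_2)f^n=0$ for all $0\le j\le n$, the lowest nonzero coefficient of $g_1-g_2$ would contribute $f(0)^n\ne 0$), or simply quote the explicit formula $g(x)=f(x)\,\frac{u(\Rev(v))}{v'(\Rev(v))}$ from the proposition that computes the central description of a given $(u,v)$, which determines $g$ outright. Second, that same formula gives an even shorter route that bypasses $A$ and $Z$ entirely: with $u=\frac{xv'}{v}$ one computes $g=1$ immediately, and conversely the array $\{1,f\}$ equals $\left(\frac{x\phi'(x)}{\phi(x)},\phi(x)\right)$ with $\phi=\Rev\left(\frac{x}{f}\right)$ by the paper's Proposition 2, which is literally a hitting-time element; your version keeps the statement anchored in the $A$- and $Z$-sequence language of the section, which is what the proposition is phrased in, so both routes are reasonable.
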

\section{A vertical antecedent}
We have the following result.
\begin{proposition} The inverse of the number triangle $\left(t_{n,k}\right)$ defined by
$$t_{n,k}=[x^{n-k}] g(x)f(x)^n$$ is the vertical half of the Riordan array given by
$$\left(\frac{1}{g\left(\Rev\left(\frac{x}{f}\right)\right)}, \frac{x^2}{\Rev \left(\frac{x}{f}\right)}\right).$$ \end{proposition}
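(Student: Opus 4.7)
The plan is to compute both sides of the claimed identity explicitly and verify they coincide. By the earlier corollary giving the inverse of $(t_{n,k})$, we already know that $(t_{n,k})^{-1}$ equals the Riordan array
$$\left(\frac{1}{g(x)}\cdot \frac{x(x/f(x))'}{x/f(x)},\;\frac{x}{f(x)}\right).$$
So it suffices to apply the vertical-half formula from Proposition~1 to the proposed antecedent
$$\left(\frac{1}{g(\Rev(x/f))},\;\frac{x^2}{\Rev(x/f)}\right)$$
and confirm that the result matches this expression.

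Abbreviate $F(x)=\Rev(x/f(x))$, so the proposed array is $(U,W)=(1/g(F),\,x^2/F)$. Proposition~1 gives its vertical half as $\bigl(U(\phi)\,x\phi'/\phi,\;\phi\bigr)$ where $\phi(x)=\Rev(x^2/W(x))=\Rev(F(x))$. Since compositional inversion is involutive, $\Rev(F)=\Rev(\Rev(x/f))=x/f(x)$, so $\phi(x)=x/f(x)$. Substituting, $U(\phi(x))=1/g(F(x/f(x)))=1/g(x)$, where the last equality uses the defining identity $F(x/f(x))=\Rev(x/f)\circ(x/f)(x)=x$. Therefore the vertical half is
$$\left(\frac{1}{g(x)}\cdot\frac{x(x/f(x))'}{x/f(x)},\;\frac{x}{f(x)}\right),$$
which matches the inverse formula above.

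The only nontrivial step is the book-keeping with nested compositional inverses: one must carefully distinguish $F$ as a formal series in its own variable from $F$ evaluated at $x/f(x)$, and apply the cancellation $\Rev(x/f)\circ(x/f)=x$ in the correct direction. Once this is pinned down, the proof is a direct substitution that assembles Proposition~1 with the explicit inverse formula from the earlier corollary; no further calculation is required.
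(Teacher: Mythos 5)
Your argument is correct: combining the paper's earlier corollary, which identifies $(t_{n,k})^{-1}$ with the Riordan array $\left(\frac{1}{g(x)}\frac{x\left(\frac{x}{f}\right)'}{\frac{x}{f}},\frac{x}{f}\right)$, with the vertical-half formula (noting $x^2/W=F$, $\Rev(F)=x/f$, and $F\circ\frac{x}{f}=x$) is exactly the verification the paper leaves implicit, as it states this proposition without a written proof. Your bookkeeping with the nested inverses is accurate, so nothing is missing.
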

\begin{example} We consider Pascal's triangle with $t_{n,k}=\binom{n}{k}$. Then $f(x)=1+x$ and $g(x)=1$.
We have $$\frac{x}{f(x)}=\frac{x}{1+x} \Longrightarrow \Rev \left(\frac{x}{f}\right)=\frac{x}{1-x},$$ and hence
$$\frac{x^2}{\Rev \left(\frac{x}{f}\right)}=x(1-x).$$ We also have
$$\frac{1}{g\left(\Rev\left(\frac{x}{f}\right)\right)}=1.$$
Thus we obtain the Riordan array $(1, x(1-x))$ whose vertical half is the inverse Pascal matrix with general term
$(-1)^{n-k}\binom{n}{k}$. The Riordan array $(1, x(1-x))$ begins
$$\left(
\begin{array}{cccccccc}
 1 & 0 & 0 & 0 & 0 & 0 & 0 & 0 \\
 0 & 1 & 0 & 0 & 0 & 0 & 0 & 0 \\
 0 & -1 & 1 & 0 & 0 & 0 & 0 & 0 \\
 0 & 0 & -2 & 1 & 0 & 0 & 0 & 0 \\
 0 & 0 & 1 & -3 & 1 & 0 & 0 & 0 \\
 0 & 0 & 0 & 3 & -4 & 1 & 0 & 0 \\
 0 & 0 & 0 & -1 & 6 & -5 & 1 & 0 \\
 0 & 0 & 0 & 0 & -4 & 10 & -6 & 1 \\
\end{array}
\right).$$
\end{example}
\begin{example} We take the example of $g(x)=\sqrt{1-4x}, f(x)=\frac{1}{c(x)}$.
Then the Riordan array defined by $g(x)$ and $f(x)$ with general term
$$t_{n,k}=[x^{n-k}] \sqrt{1-4x} \left(\frac{1}{c(x)}\right)^n$$ begins
$$\left(
\begin{array}{ccccccc}
 1 & 0 & 0 & 0 & 0 & 0 & 0 \\
 -3 & 1 & 0 & 0 & 0 & 0 & 0 \\
 1 & -4 & 1 & 0 & 0 & 0 & 0 \\
 1 & 4 & -5 & 1 & 0 & 0 & 0 \\
 1 & 0 & 8 & -6 & 1 & 0 & 0 \\
 1 & 0 & -4 & 13 & -7 & 1 & 0 \\
 1 & 0 & 0 & -12 & 19 & -8 & 1 \\
\end{array}
\right),$$ with inverse which begins
$$\left(
\begin{array}{ccccccc}
 1 & 0 & 0 & 0 & 0 & 0 & 0 \\
 3 & 1 & 0 & 0 & 0 & 0 & 0 \\
 11 & 4 & 1 & 0 & 0 & 0 & 0 \\
 42 & 16 & 5 & 1 & 0 & 0 & 0 \\
 163 & 64 & 22 & 6 & 1 & 0 & 0 \\
 638 & 256 & 93 & 29 & 7 & 1 & 0 \\
 2510 & 1024 & 386 & 130 & 37 & 8 & 1 \\
\end{array}
\right).$$
Now we have
$$\frac{x}{f(x)}=xc(x) \Longrightarrow \Rev\left(\frac{x}{f(x)}\right)=x(1-x),$$ and
hence $$\frac{x^2}{\frac{x}{f(x)}}=\frac{x}{1-x}.$$
We also have $$\frac{1}{g\left(\Rev\left(\frac{x}{f(x)}\right)\right)}=\frac{1}{1-2x}.$$
The Riordan array $\left(\frac{1}{1-2x}, \frac{x}{1-x}\right)$ then begins
$$\left(
\begin{array}{ccccccc}
 \mathbf{1} & 0 & 0 & 0 & 0 & 0 & 0 \\
 2 & \mathbf{1} & 0 & 0 & 0 & 0 & 0 \\
 4 & \mathbf{3} & \mathbf{1} & 0 & 0 & 0 & 0 \\
 8 & 7 & \mathbf{4} & \mathbf{1} & 0 & 0 & 0 \\
 16 & 15 & \mathbf{11} & \mathbf{5} & \mathbf{1} & 0 & 0 \\
 32 & 31 & 26 & \mathbf{16} & \mathbf{6} & \mathbf{1} & 0 \\
 64 & 63 & 57 & \mathbf{42} & \mathbf{22} & \mathbf{7} & \mathbf{1} \\
\end{array}
\right).$$
\end{example}

\section{A structure for the central description}
In this section, we wish to explore the group structure of Riordan arrays in terms of the central description.

We let $\{g, f\}$ denote the Riordan array $\left(t_{n,k}\right)$ whose elements are given by the central description
$$t_{n,k}=[x^{n-k}] g(x)f(x)^n.$$ It is immediate that
$$\{1, 1\}=(1,x)$$ is the identity matrix. Furthermore, we have
$$\{g(x), 1\}=(g(x),x),$$   and
$$\{1, f(x)\}=\left(\frac{x \left(\Rev\left(\frac{x}{f}\right)\right)'}{\Rev\left(\frac{x}{f}\right)}, \Rev\left(\frac{x}{f}\right)\right)=\left(\frac{x \left(\frac{x}{f}\right)'}{\frac{x}{f}}, \frac{x}{f}\right)^{-1}.$$

We then ask the question: given two Riordan arrays $\{g_1, f_1\}$ and $\{g_2, f_2\}$, what are $g_3, f_3$ in the product
$$\{g_1, f_1\} \cdot \{g_2, f_2\} = \{g_3, f_3\}?$$
An initial response is the following.
\begin{proposition} We have
$$\{1, f_1\} \cdot \{1, f_2\} = \{1, \frac{x}{\left(\frac{x}{f_1(x)}\right) \circ \left(\frac{x}{f_2(x)}\right)(x)}\}.$$
\end{proposition}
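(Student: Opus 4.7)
The plan is to recognize that $\{1, f\}$ lies (up to inversion) in the hitting-time subgroup, and then use the well-known fact that hitting-time arrays compose nicely.

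First, I would set $h_i(x) = \frac{x}{f_i(x)}$ for $i = 1, 2$, noting that $h_i \in \mathcal{F}_1$ since $f_i \in \mathcal{F}_0$. From the explicit formula given just before the proposition, we have
\[
\{1, f_i\} \;=\; \left(\frac{x\, h_i'(x)}{h_i(x)},\; h_i(x)\right)^{-1},
\]
so each $\{1, f_i\}^{-1}$ is a hitting-time Riordan array. Taking inverses of both sides of the desired identity, it suffices to prove
\[
\{1, f_2\}^{-1} \cdot \{1, f_1\}^{-1} \;=\; \left(\frac{x\, H'(x)}{H(x)},\; H(x)\right),
\]
where $H(x) = (h_1 \circ h_2)(x)$.

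Next, I would carry out this product directly using the Riordan multiplication rule $(d, v) \cdot (u, w) = (d(x)\, u(v(x)),\, w(v(x)))$. Applied to the two hitting-time elements, this yields
\[
\left(\frac{x\, h_2'(x)}{h_2(x)} \cdot \frac{h_2(x)\, h_1'(h_2(x))}{h_1(h_2(x))},\; h_1(h_2(x))\right)
\;=\; \left(\frac{x\, h_2'(x)\, h_1'(h_2(x))}{H(x)},\; H(x)\right).
\]
The chain rule $H'(x) = h_1'(h_2(x))\, h_2'(x)$ then collapses the first coordinate to $\dfrac{x\, H'(x)}{H(x)}$, as required. This confirms that the hitting-time subgroup is closed under composition through the correspondence $v \mapsto \bigl(\tfrac{xv'}{v}, v\bigr)$.

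Finally, inverting gives $\{1, f_1\} \cdot \{1, f_2\} = \bigl(\tfrac{xH'}{H}, H\bigr)^{-1}$, and comparing this with the identity $\{1, f_3\} = \bigl(\tfrac{x(x/f_3)'}{x/f_3}, \tfrac{x}{f_3}\bigr)^{-1}$ forces $\tfrac{x}{f_3} = H = \tfrac{x}{f_1} \circ \tfrac{x}{f_2}$, whence $f_3 = \dfrac{x}{(x/f_1) \circ (x/f_2)}$. There is no real obstacle here; the main subtlety is bookkeeping the order of multiplication when passing to inverses (since $(AB)^{-1} = B^{-1}A^{-1}$) and correctly applying the chain rule, both of which are routine.
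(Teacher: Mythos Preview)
Your argument is correct. The paper states this proposition without proof, so there is nothing to compare your approach against; your route via the closure of the hitting-time subgroup under Riordan multiplication (using the identification $\{1,f\}=\bigl(\tfrac{x(x/f)'}{x/f},\tfrac{x}{f}\bigr)^{-1}$ given just before the proposition, together with the chain rule) is a clean and natural way to fill the gap.
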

\begin{example}
We have
\begin{align*}\{1, \frac{1}{1-x}\} \cdot \{1, 1+2x\}&=\{1, \frac{x}{(x(1-x))\circ (\frac{x}{1+2x})(x)}\}\\
&=\{1, \frac{x}{\frac{x}{1+2x}(1-\frac{x}{1+2x})}\}\\
&=\{1, \frac{x}{\frac{x(1+x)}{(1+2x)^2}}\} \\
&=\{1, \frac{(1+2x)^2}{1+x}\}.\end{align*}
\end{example}
We can deduce from the above product law that
$$\{1, f(x)\}^{-1}=\{1, \frac{x}{\Rev\left(\frac{x}{f(x)}\right)}\}.$$
\begin{proposition} We have
$$\{g, 1\}\cdot \{1, f\}=\{g\left(\frac{x}{f(x)}\right), f(x)\}.$$
\end{proposition}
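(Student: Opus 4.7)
The plan is to unwrap the central notation into ordinary Riordan array notation, multiply in the Riordan group, and then re-encode in central form using Proposition~4 (the conversion formulas $f_3 = x/\Rev(v_3)$ and $g_3 = f_3\,u_3(\Rev(v_3))\,\Rev(v_3)'$).

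First I would write $\{g,1\} = (g(x),x)$ and, using the identity established just before the statement, $\{1,f\} = (d(x),h(x))$ where
$$h(x) = \Rev\!\left(\tfrac{x}{f(x)}\right), \qquad d(x) = \frac{x\,h'(x)}{h(x)}.$$
The Riordan product rule then gives
$$(g(x),x)\cdot(d(x),h(x)) = \bigl(g(x)\,d(x),\,h(x)\bigr),$$
so in central form we must have $f_3 = x/\Rev(h)$ and
$$g_3 = f_3\cdot (g\cdot d)\bigl(\Rev(h)\bigr)\cdot \Rev(h)'.$$

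Next I would observe that $\Rev(h)(x) = x/f(x)$, which immediately yields $f_3(x) = f(x)$, matching the claimed second component. What remains is the identification $g_3(x) = g(x/f(x))$, and the key input is evaluating $d$ at $x/f(x)$. Substituting and using that $h$ and $x/f$ are compositional inverses,
$$d\!\left(\tfrac{x}{f(x)}\right) = \frac{(x/f(x))\,h'(x/f(x))}{h(x/f(x))} = \frac{h'(x/f(x))}{f(x)}.$$
Differentiating the identity $h(x/f(x)) = x$ gives $h'(x/f(x))\cdot (x/f(x))' = 1$, so
$$d\!\left(\tfrac{x}{f(x)}\right)\cdot \bigl(\tfrac{x}{f(x)}\bigr)' = \frac{1}{f(x)}.$$
Plugging this into the formula for $g_3$,
$$g_3(x) = f(x)\cdot g\!\left(\tfrac{x}{f(x)}\right)\cdot d\!\left(\tfrac{x}{f(x)}\right)\cdot \bigl(\tfrac{x}{f(x)}\bigr)' = f(x)\cdot g\!\left(\tfrac{x}{f(x)}\right)\cdot \frac{1}{f(x)} = g\!\left(\tfrac{x}{f(x)}\right),$$
which is the desired expression.

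The only subtle point, and hence the main obstacle, is the computation of $d(\Rev(h))\cdot \Rev(h)'$: one must recognise that the factor $h'(\Rev(h))$ produced by the definition of $d$ pairs with $(x/f)'$ through the inverse-function derivative identity $h'(x/f)\cdot (x/f)' = 1$, so that the Jacobian factors cancel cleanly. Everything else is bookkeeping from Proposition~4 and the Riordan product rule.
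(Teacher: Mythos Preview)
Your proof is correct and follows essentially the same strategy as the paper: unwrap $\{g,1\}$ and $\{1,f\}$ into ordinary Riordan form, multiply, and re-encode centrally. The only procedural difference is in the re-encoding step: the paper pattern-matches the product $\bigl(g(x)\tfrac{xh'(x)}{h(x)},\,h(x)\bigr)$ directly against the general form from Proposition~2, writing $g(x)=G(h(x))$ and solving $G(x)=g(x/f(x))$, whereas you apply the explicit inverse formulas of Proposition~6 and carry out the derivative cancellation $h'(x/f)\cdot(x/f)'=1$. Both routes amount to the same observation, yours being slightly more computational and the paper's slightly more structural.
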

\begin{proof}
We have
\begin{align*}
\{g, 1\}\cdot \{1, f\}&=(g(x),x) \cdot \left(\frac{x \left(\Rev\left(\frac{x}{f(x)}\right)\right)'}{\Rev\left(\frac{x}{f}\right)}, \Rev\left(\frac{x}{f}\right)\right)\\
&=\left(g(x) \frac{x \left(\Rev\left(\frac{x}{f(x)}\right)\right)'}{\Rev\left(\frac{x}{f}\right)}, \Rev\left(\frac{x}{f}\right)\right)\\
&=\left(G\left(\Rev\left(\frac{x}{f}\right)\right)\frac{x \left(\Rev\left(\frac{x}{f(x)}\right)\right)'}{\Rev\left(\frac{x}{f}\right)}, \Rev\left(\frac{x}{f}\right)\right)\\
&=\{G(x), f(x)\}, \end{align*}
where $$g(x)=G\left(\Rev\left(\frac{x}{f}\right)\right) \Longrightarrow G(x)=g\left(\frac{x}{f(x)}\right).$$
\end{proof}
\begin{corollary} We have
$$ \{g(x), f(x)\}=\{g\left(\Rev\left(\frac{x}{f(x)}\right)\right), 1\}\cdot \{1, f(x)\}.$$
\end{corollary}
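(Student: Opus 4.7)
The plan is to derive this corollary by directly inverting the preceding proposition. That proposition states $\{g,1\}\cdot\{1,f\} = \{g(x/f(x)), f(x)\}$, so the task is to start from the target right-hand side $\{g(x), f(x)\}$ and recognize which first factor $\{\tilde g, 1\}$ produces it when composed with $\{1,f\}$.

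First I would set $h(x) := x/f(x)$, which lies in $\mathcal{F}_1$ since $f \in \mathcal{F}_0$, so its compositional inverse $\Rev(h) = \Rev(x/f(x))$ is well defined. I would then define
$$\tilde g(x) := g\!\left(\Rev\!\left(\frac{x}{f(x)}\right)\right)$$
and verify that $\tilde g(x/f(x)) = g\bigl(\Rev(x/f(x)) \circ (x/f(x))\bigr) = g(x)$, using only that composition with the compositional inverse is the identity.

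Next, I would feed $\tilde g$ into the preceding proposition in place of $g$ to obtain
$$\{\tilde g, 1\}\cdot\{1, f\} \;=\; \{\tilde g(x/f(x)), f(x)\} \;=\; \{g(x), f(x)\},$$
and substituting back the definition of $\tilde g$ yields exactly the claimed identity.

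The only point that needs checking along the way is that $\tilde g \in \mathcal{F}_0$, so that $\{\tilde g, 1\}$ is a legitimate object in the central description. Since $\Rev(x/f(x))$ has zero constant term, we have $\tilde g(0) = g(0)\ne 0$; so $\tilde g\in\mathcal F_0$ and the substitution is admissible. This is really the only possible obstacle, and it amounts to a sanity check rather than a genuine difficulty.
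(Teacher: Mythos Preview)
Your proposal is correct and is exactly the intended derivation: the paper states the corollary without proof, as an immediate consequence of the preceding proposition, and your argument---choosing $\tilde g = g\circ \Rev(x/f(x))$ so that $\tilde g(x/f(x))=g(x)$ and then applying that proposition---is precisely the substitution that makes the corollary follow. The check that $\tilde g\in\mathcal F_0$ is a nice touch the paper leaves implicit.
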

The general product formula is as follows.
\begin{proposition}
We have
$$\{g_1, f_1\}\cdot \{g_2, f_2\}= \{g_1\left(\frac{x}{f_2(x)}\right)g_2(x), \frac{x}{\left(\frac{x}{f_1(x)}\right) \circ \left(\frac{x}{f_2(x)}\right)}\}.$$
\end{proposition}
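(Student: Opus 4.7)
The plan is to compute the product directly in standard Riordan notation, appealing to the first proposition of Section~3 to translate $\{g_i,f_i\}$ into $(u_i,v_i)$-form and back at the end. Writing $\bar v_i := \Rev(x/f_i(x))$ for $i=1,2$, that proposition identifies
$$
\{g_i, f_i\} \;=\; \left(g_i(\bar v_i)\cdot \frac{x\,\bar v_i'}{\bar v_i},\; \bar v_i\right).
$$
Applying the Riordan product rule $(d_1,h_1)\cdot(d_2,h_2) = (d_1\cdot d_2(h_1),\, h_2(h_1))$ and collapsing $\bar v_1'(x)\cdot \bar v_2'(\bar v_1(x))$ into $(\bar v_2\circ\bar v_1)'(x)$ via the chain rule, the product $\{g_1,f_1\}\cdot\{g_2,f_2\}$ becomes
$$
\left(g_1(\bar v_1)\,g_2(\bar v_2\circ\bar v_1)\cdot \frac{x\,(\bar v_2\circ\bar v_1)'}{\bar v_2\circ\bar v_1},\; \bar v_2\circ\bar v_1\right).
$$

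Set $\bar v_3 := \bar v_2\circ\bar v_1$ and compare with the Riordan form of $\{g_3,f_3\}$. Matching second components forces $\bar v_3 = \Rev(x/f_3)$; taking compositional inverse and using that $\Rev$ reverses composition gives $x/f_3 = \Rev(\bar v_1)\circ\Rev(\bar v_2) = (x/f_1)\circ(x/f_2)$, which rearranges to the claimed $f_3$. Matching first components and cancelling the common factor $x\,\bar v_3'/\bar v_3$ reduces the task to solving
$$
g_3(\bar v_3) \;=\; g_1(\bar v_1)\cdot g_2(\bar v_3).
$$

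To extract $g_3(x)$, I would precompose both sides with $\bar v_3^{-1}$ (valid since $\bar v_3\in\mathcal F_1$). The key cancellation is
$$
\bar v_1\circ \bar v_3^{-1} \;=\; \bar v_1\circ (\bar v_2\circ\bar v_1)^{-1} \;=\; \bar v_1\circ\bar v_1^{-1}\circ\bar v_2^{-1} \;=\; \bar v_2^{-1} \;=\; x/f_2(x),
$$
whence $g_3(x) = g_1(x/f_2(x))\,g_2(x)$, as stated. The main obstacle is purely bookkeeping: one must track two nested reversions through the chain rule and then perform the substitution that disentangles them. Nothing is conceptually deep once the identity $(\bar v_2\circ\bar v_1)^{-1} = \bar v_1^{-1}\circ\bar v_2^{-1}$ is invoked, but the passage from $g_3(\bar v_3) = g_1(\bar v_1)g_2(\bar v_3)$ to a closed form in $x$ alone is the step where a sign or composition slip is most likely, so I would write out the precomposition explicitly.
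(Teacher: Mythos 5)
Your proof is correct, and the computation is sound at every step: translating each factor $\{g_i,f_i\}$ into the form $\left(g_i(\bar{v}_i)\frac{x\bar{v}_i'}{\bar{v}_i},\bar{v}_i\right)$ with $\bar{v}_i=\Rev(x/f_i)$, multiplying by the Riordan rule, collapsing via the chain rule to $\left(g_1(\bar{v}_1)g_2(\bar{v}_3)\frac{x\bar{v}_3'}{\bar{v}_3},\bar{v}_3\right)$ with $\bar{v}_3=\bar{v}_2\circ\bar{v}_1$, and then solving for $f_3$ and $g_3$ using $(\bar{v}_2\circ\bar{v}_1)^{-1}=\bar{v}_1^{-1}\circ\bar{v}_2^{-1}$ and $\bar{v}_i^{-1}=x/f_i$. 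The cancellation of the unit $x\bar{v}_3'/\bar{v}_3$ and the precomposition with $\bar{v}_3^{-1}$ are both legitimate since $\bar{v}_3\in\mathcal{F}_1$. The route differs from the paper's in presentation rather than substance: the paper states the general formula without proof, as the culmination of three preceding results --- the product law $\{1,f_1\}\cdot\{1,f_2\}$, the law $\{g,1\}\cdot\{1,f\}=\{g(x/f),f\}$ (which it proves by exactly the kind of direct $(u,v)$-translation you use), and the decomposition $\{g,f\}=\{g(\Rev(x/f)),1\}\cdot\{1,f\}$ --- so the intended argument is to factor each $\{g_i,f_i\}$ and chain those special cases. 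Your one-shot direct computation is more self-contained (it does not require first establishing the $\{1,f_1\}\cdot\{1,f_2\}$ law, which in fact falls out as the special case $g_1=g_2=1$), at the cost of redoing the chain-rule bookkeeping that the paper's special cases already encapsulate; either way the heart of the matter is the identity $\bar{v}_1\circ\bar{v}_3^{-1}=\bar{v}_2^{-1}=x/f_2$, which you isolate correctly.
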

\begin{corollary} We have
$$\{g, f\}^{-1}=\{\frac{1}{g\left(\Rev\left(\frac{x}{f}\right)\right)}, \frac{x}{\Rev\left(\frac{x}{f}\right)}\}.$$
\end{corollary}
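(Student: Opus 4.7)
The plan is to derive the inverse directly from the general product formula established in the preceding proposition. I would set $\{g_1,f_1\}=\{g,f\}$ and let $\{g_2,f_2\}$ denote the unknown inverse, then impose the condition
$$\{g,f\}\cdot\{g_2,f_2\}=\{1,1\}.$$
Applying the product formula, this equation becomes a pair of equations: one for the second entry,
$$\frac{x}{\left(\frac{x}{f(x)}\right)\circ\left(\frac{x}{f_2(x)}\right)}=1,$$
and one for the first entry,
$$g\!\left(\frac{x}{f_2(x)}\right)g_2(x)=1.$$

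The first equation says that $\frac{x}{f(x)}$ and $\frac{x}{f_2(x)}$ are compositional inverses of one another, so
$$\frac{x}{f_2(x)}=\Rev\!\left(\frac{x}{f(x)}\right),\qquad\text{hence}\qquad f_2(x)=\frac{x}{\Rev\!\left(\frac{x}{f(x)}\right)}.$$
Substituting $\frac{x}{f_2(x)}=\Rev\!\left(\frac{x}{f}\right)$ into the first-entry equation yields $g\!\left(\Rev\!\left(\frac{x}{f}\right)\right)g_2(x)=1$, and therefore
$$g_2(x)=\frac{1}{g\!\left(\Rev\!\left(\frac{x}{f}\right)\right)}.$$
These are exactly the two components claimed in the corollary.

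For full rigour one should also verify that the same $\{g_2,f_2\}$ is a left inverse, i.e.\ that $\{g_2,f_2\}\cdot\{g,f\}=\{1,1\}$; this is a direct computation with the product formula, using $\Rev\!\left(\frac{x}{f_2}\right)=\frac{x}{f}$ which follows by inverting the relation above. The main (very mild) obstacle is really just bookkeeping: keeping track of where $\Rev$ is applied and confirming that the compositions $\frac{x}{f}\circ\frac{x}{f_2}$ and $\frac{x}{f_2}\circ\frac{x}{f}$ both collapse to $x$. Since group inverses are unique once existence is established, the one-sided calculation plus associativity of the Riordan product (inherited from matrix multiplication) is in fact already sufficient.
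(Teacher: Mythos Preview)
Your argument is correct and is exactly the intended derivation: the paper states this result as a corollary of the general product formula without a separate proof, so solving $\{g,f\}\cdot\{g_2,f_2\}=\{1,1\}$ componentwise, as you do, is precisely the implied route. Your remark that one-sided invertibility suffices (since these elements live in the Riordan group) is a welcome clarification.
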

\begin{example}
We have
$$\{1+x+x^2, \frac{1}{1-x}\}^{-1}=\{\frac{1}{2-x-\sqrt{1-4x}}, \frac{1}{c(x)}\}.$$
\end{example}
\begin{example} We take the example of the array $(u, v)=\left(\frac{1}{1+x+x^2}, \frac{x}{1+x+x^2}\right)$. This is the coefficient array of a family of orthogonal polynomials \cite{classical}, whose moments are the Motzkin numbers. In fact, we have
$$\left(\frac{1}{1+x+x^2}, \frac{x}{1+x+x^2}\right)^{-1}=\left(\frac{1-x-\sqrt{1-2x-3x^2}}{2x^2}, \frac{1-x-\sqrt{1-2x-3x^2}}{2x}\right).$$
Then we have
$$(u, v)=\{\frac{1-2x-3x^2+(1-x)\sqrt{1-2x-3x^2}}{2(1-2x-3x^2)}, \frac{1-x+\sqrt{1-2x-3x^2}}{2}\}.$$
The corresponding moment matrix $(u, v)^{-1}$ is then given by
$$(u, v)^{-1}= \{1-x^2, 1+x+x^2\}.$$
Thus in the central representation, the moment matrix has a simple description.

We find in particular that the Motzkin numbers $M_n$ can be described as follows:
$$M_n = [x^n] (1-x^2)(1+x+x^2)^n.$$
More generally, we have the following proposition.
\begin{proposition} The moments $\mu_n$ of the family of generalized Chebyshev polynomials whose coefficient matrix is given by the Riordan array
$$\left(\frac{1- s x - t x^2}{1+ ax + b x^2}, \frac{x}{1+a x + b x^2}\right)$$ satisfy
$$ \mu_n =[x^n] \frac{1-bx^2}{1- sx-tx^2} (1+ax+bx^2)^n.$$
\end{proposition}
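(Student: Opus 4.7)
The plan is to apply Proposition 5 (the central description of a given Riordan array) to the inverse matrix $(u,v)^{-1}$, since the moments $\mu_n$ are precisely the first column $t_{n,0}$ of the moment matrix $(u,v)^{-1}$. Write $(u,v)^{-1}=(U,V)$ with $U(x)=1/u(\bar v(x))$ and $V(x)=\bar v(x)$, where $\bar v=\Rev(v)$. By Proposition 5 applied to $(U,V)$, the central data of this inverse is
$$f(x)=\frac{x}{\Rev(V)(x)}=\frac{x}{v(x)}, \qquad g(x)=f(x)\,U(\Rev V(x))\,(\Rev V)'(x).$$
The key simplification is that $\Rev V=\Rev(\bar v)=v$, so $U(\Rev V(x))=U(v(x))=1/u(\bar v(v(x)))=1/u(x)$, and $(\Rev V)'(x)=v'(x)$. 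Hence the formulas collapse to
$$f(x)=\frac{x}{v(x)}, \qquad g(x)=\frac{v'(x)}{u(x)}\cdot \frac{x}{v(x)}.$$

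Now I would substitute the explicit data $u(x)=\frac{1-sx-tx^2}{1+ax+bx^2}$ and $v(x)=\frac{x}{1+ax+bx^2}$. Immediately $f(x)=1+ax+bx^2$. For $g$, a direct quotient-rule computation gives
$$v'(x)=\frac{(1+ax+bx^2)-x(a+2bx)}{(1+ax+bx^2)^2}=\frac{1-bx^2}{(1+ax+bx^2)^2},$$
and $1/u(x)=(1+ax+bx^2)/(1-sx-tx^2)$, so the factors of $(1+ax+bx^2)$ telescope to yield
$$g(x)=\frac{1-bx^2}{1-sx-tx^2}.$$

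With $f$ and $g$ identified, the central description of $(u,v)^{-1}$ reads
$$[(u,v)^{-1}]_{n,k}=[x^{n-k}]\frac{1-bx^2}{1-sx-tx^2}\,(1+ax+bx^2)^n,$$
and setting $k=0$ extracts the moments $\mu_n$ in the claimed form. The only potential obstacle is the algebraic cancellation, but since the numerator of $v'$ equals $1-bx^2$ (the factor that survives in $g$) and the denominator cancels against the two copies of $(1+ax+bx^2)$ coming from $f$ and $1/u$, the simplification is immediate; no delicate manipulation is needed.
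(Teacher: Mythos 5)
Your proposal is correct and follows essentially the paper's route: the paper's (very terse) proof likewise rests on the fact that the moment matrix is $(u,v)^{-1}$ and that its central description is $g(x)=\frac{1-bx^2}{1-sx-tx^2}$, $f(x)=1+ax+bx^2$, which is exactly what you derive by applying Proposition 5 to the inverse pair. You merely supply the details (the identification $\Rev(\bar v)=v$, the computation of $v'$, and the cancellation of the $(1+ax+bx^2)$ factors) that the paper leaves implicit, and then read off $\mu_n$ as the $k=0$ column.
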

\begin{proof} The inverse matrix $(u, v)=\left(\frac{1- s x - t x^2}{1+ ax + b x^2}, \frac{x}{1+a x + b x^2}\right)^{-1}$ has a central description given by
$$g(x)=\frac{1-bx^2}{1- sx-tx^2},\quad f(x)=1+ax+bx^2.$$
\end{proof}
For instance, we have the following
\begin{align*} M_n &=[x^n](1-x^2)(1+x+x^2)^n\\
C_{n+1}&=[x^n] (1-x^2)(1+2x+x^2)^n\\
\binom{n}{\lfloor \frac{n}{2} \rfloor}&=[x^n](1+x)(1+x^2)^n\\
\binom{2n}{n}&=[x^n](1-2x^2)(1+2x+2x^2)^n.\end{align*}
\end{example}

\section{Some common Riordan arrays}
The following table gives a list of Riordan arrays $(u, v)$ and their central description $\{g, f\}$.
\begin{center}
\begin{tabular}{|c|c|} \hline
$(u, v)$ & $\{g, f\}$ \\ \hline
$\left(\frac{1}{1-rx}, \frac{x}{1-rx}\right)$ & $\{1, 1+rx\}$ \\ \hline
$\left(1, \frac{x}{1-x}\right)$ & $\{\frac{1}{1+x}, 1+x\}$ \\ \hline
$(1, x(1-x))$ & $\{\frac{1}{c(x)\sqrt{1-4x}}, \frac{1}{c(x)}\}$ \\ \hline
$(1-x, x(1-x))$ & $\{\frac{1}{c(x)^2\sqrt{1-4x}}, \frac{1}{c(x)}\}$ \\ \hline
$\left(\frac{1}{1-x}, \frac{x(1+x)}{1-x}\right)$ & $\{\frac{1+x+\sqrt{1+6x+x^2}}{2\sqrt{1+6x+x^2}}, \frac{1+x+\sqrt{1+6x+x^2}}{2}\}$\\ \hline
$\left(\frac{1}{1-x}, \frac{x(1+x)}{1-x}\right)^{-1}$ & $\{\frac{1+2x-x^2}{1+x}, \frac{1-x}{1+x}\}$\\ \hline
$\left(\frac{1}{\sqrt{1-4x}}, xc(x)\right)$ & $\{\frac{1}{1-x}, \frac{1}{1-x}\}$ \\ \hline
$\left(\frac{1}{\sqrt{1-4rx}}, xc(rx)\right)$ & $\{\frac{1}{1-rx}, \frac{1}{1-rx}\}$ \\ \hline
$ (1, xc(x))$ & $\{\frac{1-2x}{1-x}, \frac{1}{1-x}\}$ \\ \hline
$ (c(x), x c(x))$ & $\{\frac{1-2x}{(1-x)^2},\frac{1}{1-x}\}$ \\ \hline
\end{tabular}
\end{center}
Note that this table can also be read in the following way: the left hand element $(u,v)$ is the vertical half of the Riordan array $(g(x), xf(x))$.

For instance, $(1-x, x(1-x))$, or equivalently, $\{\frac{1}{c(x)^2\sqrt{1-4x}}, \frac{1}{c(x)}\}$, is the vertical half of the Riordan array $\left(\frac{1}{c(x)^2\sqrt{1-4x}}, \frac{x}{c(x)}\right)$, which begins
$$\left(
\begin{array}{ccccccccc}
 \mathbf{1} & 0 & 0 & 0 & 0 & 0 & 0 & 0 & 0 \\
 0 & \mathbf{1} & 0 & 0 & 0 & 0 & 0 & 0 & 0 \\
 1 & \mathbf{-1} & \mathbf{1} & 0 & 0 & 0 & 0 & 0 & 0 \\
 4 & 0 & \mathbf{-2} & \mathbf{1} & 0 & 0 & 0 & 0 & 0 \\
 15 & 1 & \mathbf{0} & \mathbf{-3} & \mathbf{1} & 0 & 0 & 0 & 0 \\
 56 & 5 & 0 & \mathbf{1} & \mathbf{-4} & \mathbf{1} & 0 & 0 & 0 \\
 210 & 21 & 1 & \mathbf{0} & \mathbf{3} & \mathbf{-5} & \mathbf{1} & 0 & 0 \\
 792 & 84 & 6 & 0 & \mathbf{0} & \mathbf{6} & \mathbf{-6} & \mathbf{1} & 0 \\
 3003 & 330 & 28 & 1 & \mathbf{0} & \mathbf{-1} & \mathbf{10} & \mathbf{-7} & \mathbf{1} \\
\end{array}
\right).$$
The vertical half of this matrix begins
$$\left(
\begin{array}{cccccc}
 1 & 0 & 0 & 0 & 0 & 0 \\
 -1 & 1 & 0 & 0 & 0 & 0 \\
 0 & -2 & 1 & 0 & 0 & 0 \\
 0 & 1 & -3 & 1 & 0 & 0 \\
 0 & 0 & 3 & -4 & 1 & 0 \\
 0 & 0 & -1 & 6 & -5 & 1 \\
\end{array}
\right).$$
This is $(1-x, x(1-x))$.

Similarly, the Riordan array $\left(\frac{1-2x}{1-x}, \frac{x}{1-x}\right)$ which begins
$$\left(
\begin{array}{ccccccccc}
 1 & 0 & 0 & 0 & 0 & 0 & 0 & 0 & 0 \\
 -1 & 1 & 0 & 0 & 0 & 0 & 0 & 0 & 0 \\
 -1 & 0 & 1 & 0 & 0 & 0 & 0 & 0 & 0 \\
 -1 & -1 & 1 & 1 & 0 & 0 & 0 & 0 & 0 \\
 -1 & -2 & 0 & 2 & 1 & 0 & 0 & 0 & 0 \\
 -1 & -3 & -2 & 2 & 3 & 1 & 0 & 0 & 0 \\
 -1 & -4 & -5 & 0 & 5 & 4 & 1 & 0 & 0 \\
 -1 & -5 & -9 & -5 & 5 & 9 & 5 & 1 & 0 \\
 -1 & -6 & -14 & -14 & 0 & 14 & 14 & 6 & 1 \\
\end{array}
\right),$$ has a vertical half which begins
$$\left(
\begin{array}{cccccc}
 1 & 0 & 0 & 0 & 0 & 0 \\
 0 & 1 & 0 & 0 & 0 & 0 \\
 0 & 1 & 1 & 0 & 0 & 0 \\
 0 & 2 & 2 & 1 & 0 & 0 \\
 0 & 5 & 5 & 3 & 1 & 0 \\
 0 & 14 & 14 & 9 & 4 & 1 \\
\end{array}
\right).$$
This is $(1, xc(x))$.

\section{The exponential case}
We recall that an exponential Riordan array $[u, v]$ is defined by two exponential generating functions 
$$u(x)=\sum_{n=0}^{\infty} u_n \frac{x^n}{n!},$$ where $u_0 \ne 0$, and 
$$v(x)=\sum_{n=1}^{\infty} v_n \frac{x^n}{n!},$$ where $v_1 \ne 0$. 
Then the matrix with general $(n,k)$-term 
$$t_n = \frac{n!}{k!} [x^n] u(x)v(x)^k$$ is the invertible lower-triangular matrix that represents the exponential Riordan array $[u, v]$. 
In similar fashion, we can define, for 
$$g(x)=\sum_{n=0}^{\infty} g_n \frac{x^n}{n!},$$ where $g_0 \ne 0$, and 
$$f(x)=\sum_{n=0}^{\infty} f_n \frac{x^n}{n!},$$ where $f_0 \ne 0$, an invertible lower-triangular matrix whose $(n,k)$-th term is given by 
$$t_{n,k}=\frac{n!}{k!} [x^{n-k}] g(x)f(x)^n.$$ 
We shall denote this matrix by $\{g, f\}_e$. 

It is then possible to carry out a similar analysis as above. 
\begin{example} 
We have the following equalities 
$$\{1+rx, e^x\}_e = \left[\frac{1-r W(-x)}{1-W(-x)}, -W(-x)\right]=\left[\frac{1-x}{1+rx}, xe^{-x}\right]^{-1}.$$
Here, $W$ is the principal determination of the Lambert $W$ function. 
\end{example}

\section{Conclusions} The use of the central description of a Riordan array can often give an alternative and insightful perspective on the properties of that array. From a computational point of view, we can use whichever description provides the easiest calculation. For instance, the elements of the well known Catalan matrix $(c(x), xc(x))$ can be described by
$$[x^n] c(x)(x c(x))^k = [x^{n-k}]  c(x)^{k+1},$$ or alternatively
$$[x^{n-k}] \frac{1-2x}{(1-x)^2} \left(\frac{1}{1-x}\right)^n.$$
The relatively complicated expression $c(x)=\frac{1-\sqrt{1-4x}}{2x}$ with an expensive square root function makes the second format easier to compute. The central expression may also serve to motivate novel directions of exploration. For instance, we may wish to look at the family of arrays described by
$$[x^{n-k}] \frac{1-(r+1)x}{(1-rx)^2} \left(\frac{1}{1-rx}\right)^n.$$
The Riordan array with general term given by
$$[x^{n-k}] \frac{1-3x}{(1-2x)^2} \left(\frac{1}{1-2x}\right)^n$$ turns out to be the Riordan array
$$\left(\frac{3-c(2x)}{2(1-4x c(2x))}, xc(2x)\right).$$
This array has its inverse given by
$$\left(\frac{(1-2x)(1-4x)}{1-3x}, x(1-2x)\right).$$
The simple variant
$$[x^{n-k}] \frac{1-2x}{1-x} \left(\frac{1}{1-2x}\right)^n$$ describes the Riordan array whose $(u,v)$ representation is given by
$$\left(\frac{1-6x-16x^2+(1-2x)\sqrt{1-8x}}{2(1+x)(1-8x)}, xc(2x)\right).$$
The even simpler variant
$$[x^{n-k}] \frac{1-2x}{1-x} \left(\frac{1}{1-x}\right)^n$$ gives the variant $(1, xc(x))$ of the Catalan triangle $(c(x), xc(x))$.

\bigskip
\hrule
\bigskip
\noindent 2010 {\it Mathematics Subject Classification}: Primary
15A30; Secondary 15B36, 11C20, 20G05, 20H25.
\noindent \emph{Keywords:} Riordan array, Riordan group, central coefficients, generating function, integer sequence

\end{document}